\chardef\bslash=`\\ 
\def\verbatim{\interlinepenalty\@M \@verbatim
	\leftskip\@totalleftmargin\advance\leftskip1pc
	\frenchspacing\@vobeyspaces \@xverbatim} \makeatother \hfuzz1pc
\def\dgt@k{\dg@DX=-3 \dg@DY=2 \dg@SIZE=3}
\def\dgt@kk{\dg@DX=3 \dg@DY=-1 \dg@SIZE=3}
\theoremstyle{plain} \newtheorem{theorem}{Theorem}[section]
\newtheorem{lemma}[theorem]{Lemma}
\begin{document}


\author{Nataliya Mazurenko}
\email{mnatali@ukr.net}
\address{Department of Mathematics and Computer Science, Vasyl Stefanyk Precarpathian National University,
	57 Shevchenka str., Ivano-Frankivsk, 76025, Ukraine}

\author{Khrystyna Sukhorukova}
\email{kristinsukhorukova@gmail.com}
\address{Ivan Franko Lviv National University, 1 Universytetska str., 79000, Lviv, Ukraine}

\author{Mykhailo Zarichnyi}
\email{zarichnyi@yahoo.com}
\address{Institute of Mathematics, College of Natural Sciences, University of Rzesz\'ow, Pigonia 1, 35-310 Rzesz\'ow, Poland}

\title[Invariant idempotent $\ast$-measures]{Invariant idempotent $\ast$-measures}

\begin{abstract} The notion of $\ast$-idempotent measure is a modification of the notion of idempotent measure defined for every triangular norm $\ast$. We prove existence and uniqueness of invariant $\ast$-idempotent measures for iterated function systems on compact metric spaces.
	\end{abstract}

\keywords{triangular norms, non-additive measure, invariant measure}
\subjclass[2020]{28A80, 54B20}

\maketitle


\section{Introduction}

Idempotent mathematics is a part of mathematics in which one of the ordinary arithmetic operations in $\mathbb R$ is replaced by an idempotent operation (e.g., maximum). 

The notion of probability measure has its counterparts in idempotent mathematics. The idempotent measures (also called Maslov measures) are introduced in \cite{KM}. The topological and categorical aspects of the theory of idempotent measures are considered in \cite{Z}. Also, in \cite{BZ}  theory of the max-min measures, which can also be regarded as idempotent counterparts of probability measures, is developed.

Some analogous classes of measures, namely the so called *-measures, for any triangular norm $\ast$, are considered in \cite{Su}.

In \cite{MZ} the invariant idempotent measures for iterated function systems are defined and the existence and uniqueness theorem for such measures is proved. The proof is based on functional representation of measures. In \cite{COS} the authors proved the  existence and uniqueness of invariant idempotent measures by using Zaitov's metric  \cite{Za} as well as  modified Bazylevych-Repov\v s-Zarichnyi metric on the space of idempotent measures \cite{BRZ} and applying the Banach contraction principle.

In the present note we provide a simple proof of existence and uniqueness of invariant idempotent measures that works also for the $\ast$-idempotent measures in the sense of \cite{Su}.

\section{Preliminaries}

\subsection{$\ast$-measures}

A triangular norm (t-norm) is a continuous, associative, commutative and monotonic operation on the unit segment $\mathbb I=[0,1]$ for which 1 is a unit.  (see, e.g., \cite{KMP}).

Let $X$ be a compact Hausdorff space. By $C(X,\mathbb I)$ we denote the set of all continuous functions from $X$ to $\mathbb I$. Given $c\in \mathbb I$, we denote by $c_X\in C(X,\mathbb I)$ the constant function with the value $c$. A functional $\mu\colon C(X,\mathbb I)\to\mathbb I$ is called an idempotent $\ast$-measure \cite{Su} if

\begin{enumerate}
	\item $\mu(c_X)=c$;
	\item $\mu (\lambda\ast \varphi)=\lambda\ast \mu (\varphi)$;
	\item $\mu (\varphi\vee \psi)=\mu(\varphi)\vee\mu(\psi)$,
\end{enumerate}
for all $c\in\mathbb I$ and $\varphi,\psi\in C(X,\mathbb I)$. (Hereafter $\vee$ denotes the maximum.)

Let $M^\ast(X)$ denote the set of all $\ast$-measures on $X$.

Given $\mu\in M^\ast(X)$, $\varphi_1,\dots,\varphi_n\in C(X,\mathbb I)$ and $\varepsilon>0$, we define  $$O\langle\mu;\varphi_1,\dots,\varphi_n;\varepsilon\rangle=\{\nu\in M^\ast(X)\mid |\mu(\varphi_i)-\nu(\varphi_i)|<\varepsilon,\ i=1,\dots,n\}.$$

Then the sets of the form $O\langle\mu;\varphi_1,\dots,\varphi_n;\varepsilon\rangle$, where $\mu\in M^\ast(X)$, $\varphi_1,\dots,\varphi_n\in C(X,\mathbb I)$, $n\in C(X,\mathbb N$ and $\varepsilon>0$, comprise a base of the weak* topology on $M^\ast(X)$.

Given a continuous map $f\colon X\to Y$ of compact Hausdorff space, and $\mu \in M^\ast(X)$, define $M^\ast(f)(\mu):  C(X,\mathbb I)\to\mathbb I$ as follows: $$M^\ast(f)(\mu)(\varphi)=\mu(\varphi f).$$

Actually, $M^\ast$ is a functor from the category $\mathbf{Comp}$ of compact Hausdorff spaces and continuous maps to itself.

\subsection{Hyperspaces} Let $X$ be a compact Hausdorff space. By $\exp X$ we denote the set of all nonempty compact subsets in $X$. Given $U_1,\dots,U_n\subset X$, let $$\langle U_1,\dots,U_n\rangle=\{A\in \exp X\mid A\subset \cup_{i=1}^nU_i,\ A\cap U_i\neq\emptyset,\ i=1,\dots,n\}.$$ The family $$\{\langle U_1,\dots,U_n\rangle\mid U_1,\dots,U_n\text{ are open},\ n\in\mathbb N\}$$ is known to be a base of the {\em Vietoris topology} on the set $\exp X$.
The obtained topological space is called the {\em hyperspace} of $X$.

If $(X,d)$ is a compact metric space, then the set $\exp X$ is endowed with the 
Hausdorff metric $d_H$,
$$d_H(A,B)=\max \left\{\sup _{x\in A}\inf _{y\in B}d(x,y),\;\sup _{y\in B}\inf _{x\in A}d(x,y)\right\}.$$

The Hausdorff metric is known to generate the Vietoris topology.

In the following lemma, the sup metric is considered on the product.

\begin{lemma}\label{prod} Let $X,Y$ be compact metric spaces,  $\mathrm{pr}_Y\colon X\times Y\to Y$ be the projections. Let $A,B\in\exp (X\times Y)$. If $\mathrm{pr}_Y(A)=\mathrm{pr}_Y(B)$, then $$d_H(A,B)\le\mathrm{diam}(X).$$
	\end{lemma}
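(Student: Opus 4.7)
The plan is to unwind the definition of the Hausdorff distance with the sup metric on the product $X \times Y$ and exploit the assumption $\mathrm{pr}_Y(A) = \mathrm{pr}_Y(B)$ to pair up points in $A$ and $B$ that share the same $Y$-coordinate. The sup metric is
\[
d((x,y),(x',y')) = \max\{d_X(x,x'), d_Y(y,y')\},
\]
so two points with equal $Y$-coordinates differ by at most $d_X(x,x') \le \mathrm{diam}(X)$.

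First I would bound $\sup_{a \in A} \inf_{b \in B} d(a,b)$. Fix an arbitrary $a = (x,y) \in A$. Then $y \in \mathrm{pr}_Y(A) = \mathrm{pr}_Y(B)$, so there exists $x' \in X$ with $b := (x',y) \in B$. Consequently
\[
\inf_{b' \in B} d(a, b') \le d(a,b) = \max\{d_X(x,x'), 0\} \le \mathrm{diam}(X).
\]
Taking the supremum over $a \in A$ gives $\sup_{a \in A} \inf_{b \in B} d(a,b) \le \mathrm{diam}(X)$. The symmetric inequality, with roles of $A$ and $B$ exchanged, follows by the same argument. Combining the two estimates through the defining max in $d_H$ yields $d_H(A,B) \le \mathrm{diam}(X)$.

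I do not anticipate any real obstacle: the argument is essentially a one-line observation that the assumption lets us match $Y$-coordinates exactly, so only the $X$-coordinates contribute to the distance, and they are trivially bounded by $\mathrm{diam}(X)$. The only point worth stating carefully is that the existence of a matching point in $B$ is guaranteed by the set-theoretic equality $\mathrm{pr}_Y(A) = \mathrm{pr}_Y(B)$ (rather than a mere closeness of the projections), and that compactness of $X$ ensures $\mathrm{diam}(X)$ is finite so the bound is meaningful.
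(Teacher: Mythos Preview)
Your argument is correct: matching $Y$-coordinates via the hypothesis $\mathrm{pr}_Y(A)=\mathrm{pr}_Y(B)$ and then bounding the residual $X$-distance by $\mathrm{diam}(X)$ is exactly the intended verification. The paper states this lemma without proof, so your direct unwinding of the Hausdorff distance under the sup metric is the natural way to fill the gap.
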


If $f\colon X\to Y$ is a map, then the map $\exp f\colon \exp X\to\exp Y$ is defined as follows: $\exp f(A)=f(A)$, $A\in \exp X$.

Denote by $\bar M(X)$ the set of all $A\in\exp (X\times\mathbb I)$ satisfying the following conditions:
\begin{enumerate}
	\item $A\cap (X\times\{1\})\neq\emptyset$;
	\item $X\times\{0\}\subset A$;
	\item $A$ is saturated, i.e., if $(x,t)\in A$, then $(x,s)\in A$ for every $s\in[0,t]$.
\end{enumerate}

Let $f\colon X\to Y$ be a map. Define the map $\bar M(f)\colon \bar M(X)\to\bar M(Y)$ by the formula:
$$\bar M(f)(A)=\exp(f\times 1_{\mathbb I})(A)\cup (Y\times\{0\}).$$

Actually, $\bar M$ is a functor in the category $\mathbf{Comp}$.

The functors $M^\ast$ and $\bar M$ are known to be isomorphic for all triangular norms $\ast$ (see \cite{Su}). Note that this is an isomorphism of functors, but not of more complicated structures, e.g., monads (see \cite{SZ}). For the sake of reader's convenience, in the sequel we deal with $\bar M$ having a triangular norm $\ast$ in mind.

Given $A\in \bar M(X)$ and $r\in\mathbb I$, define $$r\bar{\ast} A=\{(x,r\ast t)\mid (x,t)\in A\}.$$

\section{Iterated Function Systems}

Let $(X,d)$ be a compact metric space. A map $f\colon X\to X$ is called a contraction if there exists $c\in[0,1)$ such that $d(f(x),f(y))\le cd(x,y)$, for all $x,y\in X$. Here, $c$ is a contraction constant of $f$.

An Iterated Function System is a collection $f_i\colon X\to X$ of contractions.

Let $\lambda_1,\dots,\lambda_n\in[0,1]$ be such that $\vee_{i=1}^n\lambda_i=1$.

Define $\Psi\colon \bar M(X)\to \bar M(X)$ as follows:
$$\Psi(A)=\cup_{i=1}^n\lambda_i\bar{\ast}\bar M(f_i)(A).$$

An element $A\in \bar M(X)$ is called invariant if $\Psi(A)=A$.

\begin{lemma}\label{subset} There exists  $A\in \bar M(X)$ such that $\Psi(A)\subset A$. 
	\end{lemma}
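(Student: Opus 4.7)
The plan is to take the largest possible candidate, namely $A = X\times\mathbb I$, and observe that the inclusion $\Psi(A)\subset A$ then becomes automatic. Concretely, I would first verify that $X\times\mathbb I$ belongs to $\bar M(X)$: its intersection with $X\times\{1\}$ equals $X\times\{1\}$, it contains $X\times\{0\}$, and saturation is immediate since every $(x,s)$ with $s\in[0,1]$ is already present.

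Next I would check that $\Psi$ preserves the ambient product, so that $\Psi(X\times\mathbb I)\subset X\times\mathbb I$. For each $i$, the map $\exp(f_i\times 1_{\mathbb I})$ sends $X\times\mathbb I$ into $f_i(X)\times\mathbb I\subset X\times\mathbb I$, and then adjoining $X\times\{0\}$ keeps us inside $X\times\mathbb I$, so $\bar M(f_i)(X\times\mathbb I)\subset X\times\mathbb I$. Finally, since a t-norm takes values in $\mathbb I$, we have $\lambda_i\ast t\in\mathbb I$ for every $t\in\mathbb I$, hence $\lambda_i\bar\ast\bar M(f_i)(X\times\mathbb I)\subset X\times\mathbb I$. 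Taking the union over $i=1,\dots,n$ gives $\Psi(X\times\mathbb I)\subset X\times\mathbb I$, as required.

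There is essentially no obstacle: the lemma is a soft observation that the functor $\bar M$ is bounded by the trivial maximal element, and the content of the invariant-measure theorem will lie in subsequent steps (presumably iterating $\Psi$ from this $A$ and extracting a fixed point as the decreasing intersection, or showing the intersection itself lies in $\bar M(X)$ and is invariant). For completeness I would also note, although not needed for the statement, that the three defining properties of $\bar M(X)$ are actually preserved by $\Psi$, using continuity and monotonicity of $\ast$, the identity $a\ast 0=0$, and the normalization $\bigvee_i\lambda_i=1$ to ensure that $\Psi(A)$ meets $X\times\{1\}$; but for the lemma as stated only the inclusion $\Psi(X\times\mathbb I)\subset X\times\mathbb I$ is needed.
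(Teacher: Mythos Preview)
Your proof is correct and is essentially the paper's argument specialized to the simplest case. The paper invokes Hutchinson to obtain some $B\in\exp X$ with $f_i(B)\subset B$ for all $i$ and then sets $A=(B\times\mathbb I)\cup(X\times\{0\})$; your choice is just $B=X$, for which the inclusion $f_i(X)\subset X$ is automatic, so the external citation is unnecessary and the resulting $A=X\times\mathbb I$ is a bit cleaner. The smaller $B$ is not used anywhere in the subsequent theorem, so nothing is lost by your simplification.
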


\begin{proof} There exists $B\in \exp X$ such that $f_i(B)\subset B$ for every $i=1,2,\dots,n$ (see  \cite{Hu}, Section 3.1, Theorem (3), Proof of Existence). Then $A=(B\times\mathbb I)\cup (X\times\{0\})$ is as required.

\end{proof}

\begin{theorem} There is a unique invariant element in $\bar M(X)$.
	\end{theorem}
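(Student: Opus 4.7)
The plan is to establish existence and uniqueness separately, both by iterating $\Psi$ in a convenient unfolded form. Setting $C_i(A) := \lambda_i\bar{\ast}(f_i\times 1_{\mathbb I})(A)$ and using $\lambda_i\bar{\ast}(X\times\{0\}) = X\times\{0\}$, one has $\Psi(A) = (X\times\{0\})\cup\bigcup_{i=1}^n C_i(A)$; since each $C_i$ distributes over unions and sends $X\times\{0\}$ into itself, induction on $n$ yields
\[
\Psi^n(A) \;=\; (X\times\{0\})\cup\bigcup_{(i_1,\dots,i_n)} C_{i_1}\cdots C_{i_n}(A),
\]
and a direct computation (using that $f_i\times 1_{\mathbb I}$ and $\lambda_j\bar{\ast}$ commute) gives $C_{i_1}\cdots C_{i_n}(A) = \{(f_{i_1}\circ\cdots\circ f_{i_n}(x),\ \lambda_{i_1}\ast\cdots\ast\lambda_{i_n}\ast t) : (x,t)\in A\}$.

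For existence, I would take $A_0\in\bar M(X)$ satisfying $\Psi(A_0)\subset A_0$ from Lemma~\ref{subset} and set $A_\infty := \bigcap_{n\ge 0}\Psi^n(A_0)$. Saturation and the inclusion $X\times\{0\}\subset A_\infty$ pass to intersections, and $A_\infty\cap(X\times\{1\})\ne\emptyset$ by the finite intersection property for the nested nonempty compacts $\Psi^n(A_0)\cap(X\times\{1\})$; hence $A_\infty\in\bar M(X)$. Invariance $\Psi(A_\infty)=A_\infty$ then follows once one verifies that $\Psi$ commutes with intersections of decreasing sequences of compacts in $X\times\mathbb I$, which reduces to the analogous property of the continuous maps $f_i\times 1_{\mathbb I}$, $\lambda_i\bar{\ast}$, and of finite unions.

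For uniqueness, let $A,B\in\bar M(X)$ be invariant and set $c := \max_i c_i<1$. Each piece $C_{i_1}\cdots C_{i_n}(A)$ lives in $f_{i_1}\cdots f_{i_n}(X)\times\mathbb I$, whose first factor has diameter at most $c^n\diam(X)$, and, since $\mathrm{pr}_{\mathbb I}(A) = \mathbb I$, its $\mathbb I$-projection equals $[0,\lambda_{i_1}\ast\cdots\ast\lambda_{i_n}]$ and depends only on the multi-index---so it agrees with the corresponding projection for $B$. Lemma~\ref{prod} applied in that subspace therefore yields $d_H(C_{i_1}\cdots C_{i_n}(A), C_{i_1}\cdots C_{i_n}(B))\le c^n\diam(X)$. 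Combining this with the elementary bounds $d_H(S\cup T,S\cup T')\le d_H(T,T')$ and $d_H(\bigcup_\alpha T_\alpha,\bigcup_\alpha T_\alpha')\le \max_\alpha d_H(T_\alpha,T_\alpha')$ (finite index) gives $d_H(\Psi^n(A),\Psi^n(B))\le c^n\diam(X)$; since $\Psi^n(A)=A$ and $\Psi^n(B)=B$ for every $n$, letting $n\to\infty$ forces $A=B$. The conceptual step---and the only place the contractivity of the IFS enters---is the localization of each $C_{i_1}\cdots C_{i_n}(A)$ into a subspace of small first-coordinate diameter while preserving its $\mathbb I$-projection, which is precisely what permits the application of Lemma~\ref{prod}; everything else (distributivity of $C_i$, continuity of $\Psi$ on decreasing sequences, and Hausdorff estimates for unions) is routine bookkeeping.
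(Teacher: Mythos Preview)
Your proof is correct and follows essentially the same route as the paper's: existence via $A_\infty=\bigcap_n\Psi^n(A_0)$ starting from Lemma~\ref{subset}, and uniqueness by unfolding $\Psi^n$ into a union over multi-indices, applying the Hausdorff-union bound, and then invoking Lemma~\ref{prod} on each piece (which lives in $f_{i_1}\cdots f_{i_n}(X)\times\mathbb I$ with matching $\mathbb I$-projections) to get the $c^n\diam(X)$ estimate. You are in fact more explicit than the paper about the unfolding, the verification that $A_\infty\in\bar M(X)$, and the continuity of $\Psi$ on decreasing intersections, all of which the paper leaves implicit.
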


\begin{proof} 
	Let $A$ be as in Lemma \ref{subset}. Then $$A\supset \Psi(A)\supset \Psi^2(A)\supset \Psi^3(A)\supset\dots$$
	and clearly $A_\infty=\cap_{i=1}^\infty  \Psi^i(A)$ is an invariant set.	
	
We are going to prove the uniqueness. Suppose that $A,A'$ are invariant elements.	Let $c>0$ be a contraction constant for all $f_i$, $i=1,\dots,k$. Then, for any natural $n$,
\begin{align*}d_H&(A,A')=d_H(\Psi^n(A),\Psi^n(A'))\\
&\le d_H(\bigcup\{(\lambda_{i_1}\ast\dots\ast\lambda_{i_n})\bar{\ast} \bar M(f_{i_1}\dots f_{i_n})(A)\mid (i_1,\dots,i_n)\in \{1,\dots,k\}^n\},\\
& \bigcup\{(\lambda_{i_1}\ast\dots\ast\lambda_{i_n})\bar{\ast} \bar M(f_{i_1}\dots f_{i_n})(A')\mid (i_1,\dots,i_n)\in \{1,\dots,k\}^n\})\\
&\le \max\{d_H((\lambda_{i_1}\ast\dots\ast\lambda_{i_n})\bar{\ast} \bar M(f_{i_1}\dots f_{i_n})(A),(\lambda_{i_1}\ast\dots\ast\lambda_{i_n})\bar{\ast} \bar M(f_{i_1}\dots f_{i_n})(A'))\\
&\mid (i_1,\dots,i_n)\in \{1,\dots,k\}^n\}\\
\le& \max\{d_H((\lambda_{i_1}\ast\dots\ast\lambda_{i_n})\bar{\ast} ((f_{i_1}\dots f_{i_n})\times 1_{\mathbb I})(A),(\lambda_{i_1}\ast\dots\ast\lambda_{i_n})\bar{\ast}  ((f_{i_1}\dots f_{i_n})\times 1_{\mathbb I})(A'))\\
&\mid (i_1,\dots,i_n)\in \{1,\dots,k\}^n\} \\
&\le c^n\mathrm{diam}(X)
\end{align*}
by Lemma \ref{prod}. Therefore $A=A'$.
	
	\end{proof}

\section{Remarks}	

Non-finite IFSs are considered in \cite{COS1}. We believe that our result can also be extended over a non-finite (e.g., compact) case.

Some interesting examples of invariant idempotent measures are presented in \cite{COS}. This corresponds to the case of the multiplication t-norm. It is an open question to find meaningful examples of invariant $\ast$-idempotent measures, for another triangular norms $\ast$.





\begin{thebibliography}{99}
	
	\bibitem{BRZ} L. Bazylevych, D. Repov\v s, M. Zarichnyi,  Spaces of
	idempotent measures of compact metric spaces. Topology Appl. 157, 1
	(2010), 136--144.
	
	\bibitem{BZ} 	V. Brydun, M. Zarichnyi, Spaces of max-min measures on compact Hausdorff spaces, Fuzzy Sets and Systems,  396 (2020), 138--151.
	
	\bibitem{COS} R.D. da Cunha, E.R. Oliveira, F. Strobin, Existence of invariant idempotent measures by contractivity  of idempotent Markov operators. J. Fixed Point Theory Appl. 25,1 (2023), Paper No.8, 11.
	
	\bibitem{COS1} R.D. da Cunha, E.R. Oliveira, F. Strobin, Fuzzy-set approach to invariant idempotent measures, Fuzzy Sets and Systems, Volume 457, Issue C, 2023, 46--65, https://doi.org/10.1016/j.fss.2022.06.008
	
	\bibitem{Hu} J. E. Hutchinson, Fractals and self-similarity Indiana Univ. Math. J. 30 (1981), 713--747.
	
\bibitem{KMP} E.P.	Klement, R. Mesiar, E. Pap,  Triangular Norms. Dordrecht: Kluwer. (2000), ISBN 0-7923-6416-3.

\bibitem{KM} V.N. Kolokoltsov, V.P. Maslov, Idempotent Analysis and Its Applications. - Springer-Science+Business Media, B.V., 1997, 318 pp., ISBN 0-7923-4509-6 
	
	\bibitem{MZ} N. Mazurenko, M. Zarichnyi, Invariant idempotent measures, Carpathian Math. Publ. 10, 1(2018), 172--178.
	
\bibitem{Su} 	Kh. Sukhorukova, Spaces of non-additive measures generated by tri- angular norms. Matematychni Studii, 59(2), 215--224.

\bibitem{SZ} Kh.O. Sukhorukova,  M.M. Zarichnyi, On $\ast$-measure monads on the category of ultrametric spaces, Carpathian Math. Publ. 2022, 14 (2), 429--436. doi:10.15330/cmp.14.2.429-436
	\bibitem{Za}	A.A. Zaitov, On a metric of the space of idempotent probability measures. Appl. Gen. Topol. 21(2020), no.1, 35--51.
	\bibitem{Z} M.M. Zarichnyi, Spaces and maps of idempotent measures, Izv. Math.74 (2010), no.3, 481--499.
	

	
	\end{thebibliography}
\end{document}